\theoremstyle{plain}
\numberwithin{equation}{section}
\newtheorem{theorem}{Theorem}
\newtheorem{lemma}{Lemma}
\numberwithin{lemma}{section}
\numberwithin{corollary}{section}
\numberwithin{proposition}{section}
\theoremstyle{definition}
\newtheorem{definition}{Definition}
\numberwithin{definition}{section}
\theoremstyle{plain}
\newtheorem{example}{Example}
\numberwithin{example}{section}
\newcommand{\nc}{\newcommand}
\nc{\C}{\mathcal{C}}
\nc{\CC}{\widetilde{C}}
\nc{\wt}{\overline}
\nc{\mc}{\mathcal}
\nc{\on}{\operatorname}
\nc{\Cl}{Cl}
\nc{\drva}{\widehat{\OOmega}}
\nc{\spec}{\OOn{Spec}}
\nc{\AutO}{\OOn{Aut} \mc{O}}
\nc{\vac}{|0\rangle}
\nc{\Z}{\mathbb{Z}}
\nc{\zf}[1]{z^{\frac{1}{#1}}}
\nc{\wf}[1]{w^{\frac{1}{#1}}}
\nc{\Mt}{M^{\sigma}}
\nc{\gr}{\OOn{gr}}
\nc{\T}{\mathbb{T}}
\nc{\LT}{\mathbb{LT}}
\nc{\CT}{\mathbb{C}\{ \mathbb{T} \}}
\nc{\g}{\mathfrak{g}}
\nc{\A}{\mathcal{A}}
\nc{\n}{\mathfrak{n}}
\nc{\I}{\mathcal{I}}
\renewcommand{\H}{\on{H}}
\nc{\Hc}{\mathcal{H}}
\nc{\HH}{\mathbf{H}}
\nc{\M}{M}
\nc{\U}{\mathcal{U}}
\nc{\F}{\mathcal{F}}
\nc{\OO}{\mathcal{O}}
\nc{\LRF}{\mathcal{LRF}}
\nc{\CRF}{\mathcal{CRF}}
\nc{\FD}{\mathcal{LFG}}
\nc{\LFD}{\mathcal{LFG}}
\nc{\FG}{\mathcal{LFG}}
\nc{\LFG}{\mathcal{LFG}}
\nc{\labb}{\emph{lab}}
\nc{\NC}{\on{NC}}
\nc{\Q}{\on{QSym}}
\nc{\Iso}{\on{Iso}}
\nc{\heck}{\mc{H}ecke}
\renewcommand{\F}{\mathcal{F}}
\nc{\al}{\alpha}
\nc{\OOl}{\OOverline}
\begin{document}

\title{Colored trees and noncommutative symmetric functions}
\author{Matt Szczesny}
\address{Department of Mathematics  
         Boston University, Boston MA, USA}
\email{szczesny@math.bu.edu}

\begin{abstract}

Let $\CRF_S$ denote the category of $S$-colored rooted forests, and $\H_{\CRF_S}$ denote its Ringel-Hall algebra as introduced in \cite{KS}. 
We construct a homomorphism from a $K^+_0 (\CRF_S)$--graded version of the Hopf algebra of noncommutative symmetric functions to $\H_{\CRF_S}$. Dualizing, we obtain a homomorphism from the Connes-Kreimer Hopf algebra to a $K^+_0 (\CRF_S)$--graded version of the algebra of quasisymmetric functions. This homomorphism is a refinement of one considered by W. Zhao in \cite{Z}.  
\end{abstract}

\maketitle 

\section{Introduction}

In \cite{KS} categories $\LRF, \LFG$ of labeled rooted forests and labeled Feynman graphs where constructed, and were shown to possess many features in common with those of finitary abelian categories. In particular, one can define the Ringel-Hall algebras $\H_{\LRF}, \H_{\LFG}$ of the categories $\LRF, \LFG$. If $\C$ is one of these categories, $\H_{\C}$ is the algebra of functions on isomorphism classes of $\C$, equipped with the convolution product
\begin{equation} \label{conv_product}
f \star g (M) = \sum_{A \subset M} f(A) g(M/A)
\end{equation}
and the coproduct
\begin{equation} \label{coprod}
\Delta(f) (M, N) := f(M \oplus N)
\end{equation}
where $M \oplus N$ denotes disjoint union of forests/graphs. Together, the structures \ref{conv_product} and \ref{coprod} assemble to form a co-commutative Hopf algebra, which was in \cite{KS} shown to be  dual to the corresponding Connes-Kreimer Hopf algebra (\cite{K}, \cite{CK}). In \cite{KS}, we also defined the Grothendieck groups $K_0 (\C)$ for $\C=\LRF, \LFG$  (which is not trivial, since $\C$ is not abelian), and showed that $\H_{\C}$ is naturally graded by $K^+_0 (\C)$ - the effective cone inside $K_0 (\C)$. 

From the point of view of Ringel-Hall algebras of finitary abelian categories, the characteristic functions of classes in  $K^+_0$ are interesting. If $\A$ is such a category, and $\alpha \in K^+_0 (\A)$,  we may consider $\kappa_{\alpha}$ -  the characteristic function of the locus of objects of class $\alpha$ inside $\on{Iso}(\A)$ (for a precise definition, see \cite{J}). It is shown there that the $\kappa_{\alpha}$ satisfy
\begin{equation} \label{div_power}
\Delta(\kappa_{\alpha}) = \sum_{\substack{\alpha_1 + \alpha_2 = \alpha \\ \alpha_1, \alpha_2 \in K^+_0 (\A) }} \kappa_{\alpha_1} \otimes \kappa_{\alpha_2}
\end{equation}
In this note, we show that these identities hold also when $\A$ is replaced by the category $\CRF$ of \emph{colored rooted forests}. If $S$ is a set, and $\CRF_S$ denotes the category of rooted forests colored by $S$, we show that $K_0 (\CRF_S)= \mathbb{Z}^{|S|}$, and if $\alpha \in K^+_0 (\CRF_S)$, we may define
\[
\kappa_{\alpha} := \sum_{ \substack{ \{ A \in \on{Iso}(\CRF_S), \\ [A] = \alpha \}}} \delta_{A}
\]
i.e. the sum of delta functions supported on isomorphism classes with K-class $\alpha$. We show that the $\kappa_{\alpha}$ satisfy the identity \ref{div_power}. 

As an application, we construct a homomorphism to $\H_{\CRF_S}$ from a $K^+_0(\CRF_S)$--graded version of the Hopf algebra of non-commutative symmetric functions (see \cite{GKLLRT}). More precisely, let $\NC_{\CRF_S}$ denote the free associative algebra on generators $X_{\alpha}, \alpha \in K^+_0 (\CRF_S)$, to which we assign degree $\alpha$. We may equip it with a coproduct determined by the requirement
\[
\Delta(X_\alpha) = \sum_{\substack{\alpha_1 + \alpha_2 = \alpha \\ \alpha_1, \alpha_2 \in K^+_0 (\CRF_S) }} X_{\alpha_1} \otimes X_{\alpha_2}
\]
with which it becomes a connected graded bialgebra, and hence a Hopf algebra. We may now define a homomorphism
\begin{align*}
\rho: \NC_{\CRF_S} & \rightarrow \H_{\CRF_S} \\
\rho(X_{\alpha}) = \kappa_{\alpha}
\end{align*}
This is a refinement of a homomorphism originally considered in \cite{Z}. 
Taking the transpose of $\rho$, we obtain a homomorphism from the Connes-Kreimer Hopf algebra to a $K^+_0(\CRF_S)$--graded version of the Hopf algebra of quasisymmetric functions. 
\bigskip

\noindent {\bf Acknowledgements:} I would like to thank Dirk Kreimer for many valuable conversations.   

\section{Recollections on $\CRF_S$}

We briefly recall the definition and necessary properties of the category $\CRF_S$, and  calculate its Grothendieck group. For details and proofs, see \cite{KS}. While \cite{KS} treats the case of uncolored trees, the extension of the results to the colored case is immediate. Please note that the notion of labeling in \cite{KS} and coloring used here are distinct. 

\subsection{The category $\CRF_S$}

\label{LCRF}
We begin by reviewing some notions related to rooted trees. Let $S$ be a set. For a tree $T$, denote by $V(T), E(T)$ the vertex and edge sets of $T$ respectively.   

\begin{definition}
\begin{enumerate}
\item A \emph{rooted tree colored by $S$} is a tree $T$, with a distinguished vertex $r(T)\in V(T)$ called the root, and an map $l: V(T) \rightarrow S$. An isomorphism between two trees $T_1, T_2$ labeled by $S$ is a pair of bijections $f_v: V(T_1) \simeq V(T_2)$, $f_e: E(T_1) \simeq E(T_2)$ which preserve roots, colors, and all incidences - we often refer to this data simply by $f$. Denote by $RT(S)$ the set of all rooted trees labeled by $S$. 
\item A \emph{rooted forest colored $S$} is either empty, or an ordered set $F = \{T_1, \cdots, T_n \}$ where $T_i \in RT(S)$. $F_1 = \{ T_1, \cdots, T_n \}$ and $F_2 =\{ T'_1, \cdots, T'_m \}$ are isomorphic if $m=n$ and there is a permutation $\sigma \in S_n $, together with isomorphisms $f_i: T_i \simeq T'_{\sigma(i)}$. 
\item An \emph{admissible cut} of a labeled colored tree $T$ is a subset $C(T) \subset E(T)$ such that at most one member of $C(T)$ is encountered along any path joining a leaf to the root. Removing the edges in an admissible cut divides $T$ into a colored rooted forest $P_C(T)$ and a colored rooted tree $R_C(T)$, where the latter is the component containing the root. The \emph{empty} and \emph{full} cuts $C_{null}, C_{full}$, where $$(P_{C_{null}}(T), R_{C_{null}}(T)) = (\emptyset, T) \textrm{ and }  (P_{C_{full}}(T), R_{C_{full}}(T)) = (T, \emptyset)$$ respectively, are considered admissible.
\item An \emph{admissible cut} on a colored forest $F = \{ T_1, \cdots, T_k \}$ is a collection of cuts  $C=\{C_1, \cdots, C_k \}$, with $C_i$ an admissible cut on $T_i$. Let
\begin{align*}
R_C (F) & := \{ R_{C_1}(T_1), \cdots, R_{C_k}(T_k) \} \\
P_C (F) & := P_{C_1}(T_1) \cup P_{C_2} (T_2) \cup \cdots \cup P_{C_k} (T_k)
\end{align*}

\end{enumerate}
\end{definition}

\begin{example} Consider the labeled rooted forest consisting of a single tree $T$ colored by $S=\{ a, b \}$ with root drawn at the top. 

\begin{center} \psset{levelsep=4ex, treesep=0.5cm}
$T:=$ \pstree{  \Tcircle{a}  } {  \pstree{ \Tcircle{b} \ncput{=}   } { \Tcircle{b} \Tcircle{a}}
 \pstree{\Tcircle{a}}{\Tcircle{b}\ncput{=} \Tcircle{a}}   }
\end{center}
and the cut edges are indicated with "=", then 
\begin{center}  \psset{levelsep=4ex, treesep=0.5cm}
$P_C(T) = $ \pstree{ \Tcircle{b}}{\Tcircle{b} \Tcircle{a}} \hspace{3cm} \Tcircle{b} \hspace{1cm}
and
\hspace{1cm}
$R_C(T) = $ \pstree{\Tcircle{a}} {\pstree{\Tcircle{a}} {\Tcircle{a}} }
\end{center}

\end{example}

\noindent We are now ready to define the category $\CRF_S$, of rooted forests colored by $S$. 

\begin{definition} The category $\CRF_S$ is defined as follows:
\begin{itemize}
\item
 \[
\on{Ob}(\CRF_S) = \{ \textrm{ rooted forests } F \textrm{ colored by S } \} 
\]
\item 
\begin{align*}
\on{Hom}(F_1,F_2) := & \{ (C_1,C_2,f) | C_i \textrm{ is an admissible cut of } F_i, \\ & \; f: R_{C_1} (F_1) \cong P_{C_2} (F_2) \} \; \; F_i \in \on{Ob}(\CRF_S). 
\end{align*}
{\bf Note:} For $F \in \CRF_S$, $(C_{null}, C_{full}, id): F \rightarrow F$ is the identity morphism in $\on{Hom}(F,F)$. We denote by $\on{Iso}(\CRF_S)$ the set of isomorphism classes of objects in $\CRF_S$. 
\end{itemize}

\end{definition}

\newpage

\noindent {\bf Example:} if 
\bigskip
\begin{center} \psset{levelsep=4ex, treesep=0.5cm}
F1:= \pstree[]{ \Tcircle{a} }{ \pstree{ \Tcircle{b} \ncput{-} } {\Tcircle{b} } } \hspace{1 cm} 
\pstree{\Tcircle{b}}{\Tcircle{a} {\pstree{\Tcircle{b}}{\Tcircle{a}\ncput{-} } }} \hspace{2cm}
F2:= \pstree{\Tcircle{a}}{ \Tcircle{a}\ncput{=} \pstree{\Tcircle{b}\ncput{=} }{\Tcircle{a}\Tcircle{b}}}
\end{center}
\bigskip
then we have a morphism $(C_1, C_2, f)$  where $C_i$ are as indicated in the diagram, and $f$ is uniquely determined by the cuts. 

\bigskip

For the definition of composition of morphisms and a proof why it is associative, please see \cite{KS}. The category $\CRF_S$ has several nice properties:

\begin{enumerate}
\item The empty forest $\emptyset$ is a null object in $\CRF_S$.
\item Disjoint union of forests equips $\CRF_S$ with a symmetric monoidal structure. We denote by $F_1 \oplus F_2$ the disjoint union of the rooted forests $F_1$ and $F_2$ labeled by $S$, and refer to this as the direct sum. 
\item Every morphism possesses a kernel and a cokernel.
\item For every admissible cut $C$ on a forest $F$, we have the short exact sequence
\begin{equation} \label{ses}
\emptyset \rightarrow P_C(F) \overset{(C_{null},C, id)}{\longrightarrow} F \overset {(C,C_{full},id)}{\longrightarrow} R_C  (F) \rightarrow \emptyset
\end{equation} 
\end{enumerate}

The second property above allows us to define the Grothendieck group of $\CRF_S$ as
\[
K_0 (\CRF_S) := \mathbb{Z}[\on{Iso}(\CRF_S] / \sim
\]
i.e. the free abelian group generated by isomorphism classes of objects modulo the relation $\sim$, where $\sim$ is generated by differences $B-A-C$ for short exact sequences
\[
\emptyset \rightarrow A \rightarrow B \rightarrow C \rightarrow \emptyset.
\] 
We denote by $[A]$ the class of $A \in \CRF_S$ in $K_0 (\CRF_S)$.

\begin{lemma}
$K_0 (\CRF_S) \simeq \mathbb{Z}^{\oplus |S|}$
\end{lemma}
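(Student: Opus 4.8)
The plan is to show that the short exact sequences coming from admissible cuts allow us to reduce every forest in $K_0(\CRF_S)$ to a $\mathbb{Z}$-linear combination of single-vertex trees, and that these single-vertex classes are linearly independent. There is exactly one single-vertex tree for each color $s \in S$; write $[s]$ for its class. Since there are $|S|$ such generators, once both claims are established we get $K_0(\CRF_S) \simeq \mathbb{Z}^{\oplus |S|}$.

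First I would prove that the classes $\{[s] : s \in S\}$ generate. Given any forest $F$, apply the short exact sequence \ref{ses} repeatedly: for a single tree $T$ with at least one edge, choose an admissible cut $C$ that severs exactly the edges incident to the root, so that $R_C(T)$ is the single-vertex tree carrying the root's color and $P_C(T)$ is the forest of subtrees hanging off the root. This gives $[T] = [R_C(T)] + [P_C(T)]$, strictly decreasing the number of edges in the pieces. Iterating, and using that $[F_1 \oplus F_2] = [F_1] + [F_2]$ from the sequence with the null-cut/full-cut pair (or directly from the monoidal structure), we conclude $[F] = \sum_{v \in V(F)} [l(v)]$, i.e. the class of $F$ is determined by the multiset of colors of its vertices.

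Next I would prove linear independence of the $[s]$, equivalently that the assignment $F \mapsto (\text{number of vertices of color } s)_{s \in S} \in \mathbb{Z}^{\oplus |S|}$ is well defined on $K_0(\CRF_S)$. For this it suffices to check that this vector is additive on short exact sequences: if $\emptyset \to A \to B \to C \to \emptyset$ arises from an admissible cut $C$ on $B$ (and every short exact sequence in $\CRF_S$ is of this form, up to isomorphism, by the structure of morphisms recalled above — this is the point I would lean on from \cite{KS}), then $V(B) = V(P_C(B)) \sqcup V(R_C(B))$ as colored sets, so the color-count vectors add. Hence the map descends to a homomorphism $K_0(\CRF_S) \to \mathbb{Z}^{\oplus |S|}$ which is visibly inverse to $(n_s) \mapsto \sum_s n_s [s]$, giving the isomorphism.

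The main obstacle is the linear-independence half: one must be sure that there are no further relations in $K_0(\CRF_S)$ beyond those forced by cuts — in other words, that every short exact sequence in $\CRF_S$ genuinely comes from an admissible cut and therefore respects the vertex-color decomposition. This is exactly where the careful description of kernels, cokernels, and morphisms in \cite{KS} is needed; granting that, the color-counting functional is manifestly well defined and the argument closes. The generation half is routine induction on the number of edges.
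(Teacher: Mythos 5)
Your proposal is correct and follows essentially the same route as the paper: the paper defines the color-counting map $\Psi(F)=\sum_s v(F,s)e_s$, checks it kills the relations coming from short exact sequences (your linear-independence half), and exhibits the inverse $\sum a_s e_s \mapsto \sum a_s[\bullet_s]$, whose surjectivity onto $K_0$ is your edge-induction generation step. You have merely spelled out the details that the paper compresses into ``$\overline{\Psi}$ and $\Phi$ are inverse to each other.''
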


\begin{proof}
For a rooted forest, let $v(F,s)$ denote the number of vertices in $F$ of color $s \in S$. Let $\mathbb{Z}^{S}$ denote the free abelian group on the set $S$, with generators  $e_s, s \in S$. Let
\begin{align*}
\Psi: \mathbb{Z}[\on{Iso}(\CRF_S)] & \rightarrow \mathbb{Z}^S \\
\Psi(F) &= \sum_{s \in S} v(F,s) e_s
\end{align*}
The subgroup generated by the relations $\sim$ like in the kernel of $\Psi$, so we get a well-defined group homomorphism
\[
\overline{\Psi}: K_0 (\CRF_S) \rightarrow \mathbb{Z}^S
\]
Now, let 
\begin{align*}
\Phi: \mathbb{Z}^S & \rightarrow K_0 (\CRF_S) \\
\Phi(\sum_{s} a_s e_s) & = \sum_{s \in S} a_s [\bullet_s ]
\end{align*}
where $\bullet_s$ denotes the Forest with one vertex colored $s$. We see that $\overline{\Psi}$ and $\Phi$ are inverse to each other, and so the result follows. 

\end{proof}

We denote by $K^+_0 (\CRF_S) \simeq \mathbb{N}^{|S|}$ the cone of effective classes in $K_0 (\CRF_S)$.

\section{Ringel-Hall algebras}
We recall the definition of the Ringel-Hall algebra of $\CRF_S$ following \cite{KS}. For an introduction to Ringel-Hall algebras in the context of abelian categories, see \cite{S}. 
We define the Ringel-Hall algebra of $\CRF_S$, denoted $\H_{\CRF_S}$, to be the $\mathbb{Q}$--vector space of finitely supported functions on isomorphism classes of $\CRF_S$. I.e.
\[
\H_{\CRF_S} := \{ f: \on{Iso}(\CRF_S) \rightarrow \mathbb{Q} | |supp(f)| < \infty  \}
\]
As a $\mathbb{Q}$--vector space it is spanned by the delta functions $\delta_A, A \in \on{Iso}(\CRF_S)$. The algebra structure on $\H_{\CRF_S}$ is given by the convolution product:
\[
f \star g (M) = \sum_{A \subset M} f(A) g(M/A) 
\]
$\H_{\CRF_S}$ possesses a co-commuative co-product given by
\begin{equation} \label{cop}
\Delta(f)(M,N)=f(M \oplus N) 
\end{equation}
as well as a natural $K^+_0 (\CRF_S)$--grading in which  $\delta_A$ has degree $[A] \in K^+_0 (\CRF_S)$. The algebra and co-algebra structures are sompatible, and $\H_{\CRF_S}$ is in fact a Hopf algebra (see \cite{KS}). It follows from \ref{cop} that
\begin{equation}
\Delta(\delta_A) = \sum_{\{ A', A'' | A' \oplus A'' \simeq A \}} \delta_{A'} \otimes \delta_{A''}
\end{equation}
where the sum is taken over all \emph{distinct} ways of writing $A$ as $A' \oplus A''$ . 

\section{$K^+_0 (\CRF_S)$--graded noncommutative symmetric functions and homomorphisms}

Let $\NC_{\CRF_S}$ denote the free associative algebra on $K^+_0 (\CRF_S)$, i.e. the free algebra generated by variables $X_{\alpha}, \alpha \in K^+_0 (\CRF_S)$. We give it the structure of a Hopf algebra through the coproduct
\begin{equation} \label{free_cop}
\Delta( X_{\gamma}) = \sum_{ \substack{ \{ \alpha + \beta = \gamma \\ \alpha, \beta \in K^+_0 (\CRF_S)\}}} X_{\alpha} \otimes X_{\beta} 
\end{equation}
and equip it with a $K^+_0 (\CRF_S)$--grading by assigning $X_{\alpha}$ degree $\alpha$. For $\alpha \in K^+_0 (\CRF_S)$, let
\[
\kappa_{\alpha} = \sum_{A \in \on{Iso}(\C), [A] = \alpha} \delta_{A}
\]
This is a $K^+_0 (\CRF_S)$--graded version of the Hopf algebra of non-commutative symmetric functions (see \cite{GKLLRT}).

\begin{example} \label{ex1}
Suppose that $S = \{ a,b \}$. We then have $K_0 (\CRF_S) \simeq \mathbb{Z}^2$, and may identify the pair $(i,j) \in K^+_0 (\CRF_S)$ as the class representing forests possessing $i$ vertices colored $"a"$ and $j$ colored $"b"$. We have for instance
\[
\kappa_{(1,1)} = \psset{levelsep=2ex, treesep=0.6cm, treenodesize=1pt,nodesepB=-2pt}
\delta_{  \pstree{\Tc*{3pt}~[tnpos=r]{a}}{\Tc*{3pt}~[tnpos=r]{b}}} + 
 \delta_{ \pstree{\Tc*{3pt}~[tnpos=r]{b}}{\Tc*{3pt}~[tnpos=r]{a}}} +
 \delta_{\Tc*{3pt}~[tnpos=b]{a} \oplus \Tc*{3pt}~[tnpos=b]{b}}  
\]   
\end{example}

\begin{theorem}
The map $\rho: \NC_{\CRF_S} \rightarrow \H_{\CRF_S}$ determined by $\rho(X_{\alpha}) =\kappa_{\alpha}$ is a Hopf algebra homomorphism. 
\end{theorem}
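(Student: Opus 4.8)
The plan is to verify that $\rho$ respects the three pieces of Hopf algebra structure — the multiplication, the comultiplication, and (as a consequence, or by checking compatibility with the grading and connectedness) the antipode. Since $\NC_{\CRF_S}$ is the \emph{free} associative algebra on the symbols $X_\alpha$, compatibility with multiplication is automatic: $\rho$ is defined on generators and extended multiplicatively, so it is a well-defined algebra homomorphism with no relations to check. Both $\NC_{\CRF_S}$ and $\H_{\CRF_S}$ are connected graded bialgebras (graded by $K^+_0(\CRF_S) \simeq \mathbb{N}^{|S|}$, with one-dimensional degree-zero piece spanned by the unit / the delta function on the empty forest), and $\rho$ is manifestly a graded map since $\kappa_\alpha$ is homogeneous of degree $\alpha$. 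For connected graded bialgebras the antipode is determined recursively by the bialgebra structure, so once $\rho$ is shown to be a bialgebra map it is automatically a Hopf algebra map; thus the real content is coalgebra compatibility.

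So the heart of the argument is the identity $\Delta_{\H}(\rho(X_\gamma)) = (\rho \otimes \rho)(\Delta_{\NC}(X_\gamma))$ for each $\gamma \in K^+_0(\CRF_S)$, i.e.
\[
\Delta_{\H}(\kappa_\gamma) = \sum_{\substack{\alpha + \beta = \gamma \\ \alpha,\beta \in K^+_0(\CRF_S)}} \kappa_\alpha \otimes \kappa_\beta .
\]
This is precisely the divided-power identity \eqref{div_power} advertised in the introduction for the category $\CRF_S$, so the task reduces to proving that identity. Using $\Delta_{\H}(\delta_A) = \sum_{\{A',A'' \,:\, A' \oplus A'' \simeq A\}} \delta_{A'} \otimes \delta_{A''}$ together with $\kappa_\gamma = \sum_{[A]=\gamma}\delta_A$, the left-hand side expands as $\sum_{[A]=\gamma}\ \sum_{A' \oplus A'' \simeq A} \delta_{A'}\otimes\delta_{A''}$, while the right-hand side is $\sum_{\alpha+\beta=\gamma}\ \sum_{[A']=\alpha,\,[A'']=\beta}\delta_{A'}\otimes\delta_{A''}$.

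The key step — and the main obstacle — is a bijectivity/bookkeeping argument matching these two sums. One direction is easy: if $A' \oplus A'' \simeq A$ with $[A]=\gamma$, then by additivity of the class on $\oplus$ (the direct sum is exactly the middle term of the split short exact sequence \eqref{ses}, so $[A] = [A'] + [A'']$ in $K_0$, and $A',A''$ effective forces $[A'],[A''] \in K^+_0$) the pair $(\delta_{A'},\delta_{A''})$ occurs on the right with $\alpha = [A']$, $\beta = [A'']$. Conversely, given forests $A',A''$ with $[A']=\alpha$, $[A'']=\beta$, $\alpha+\beta=\gamma$, set $A := A' \oplus A''$; then $[A]=\gamma$ and $(A',A'')$ is one of the decompositions of $A$. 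The only subtlety is multiplicity: I must check that each ordered pair $(A',A'')$ of isomorphism classes is counted exactly once on each side. On the $\H$-side the sum over decompositions $A' \oplus A'' \simeq A$ is, by the convention stated after \eqref{cop}, a sum over \emph{distinct} such expressions, i.e. over ordered pairs of isomorphism classes $([A'],[A''])$ whose disjoint union is the isomorphism class of $A$ — this is well-defined because $\CRF_S$ has unique decomposition into connected components (a forest is an unordered multiset of trees), so the multiset of components of $A$ determines and is determined by the pair. Hence the correspondence $(A',A'') \leftrightarrow (A = A'\oplus A'',\ (A',A''))$ is a bijection between the index sets of the two sums, and the two expansions agree term by term. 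This establishes \eqref{div_power} for $\CRF_S$, hence the coalgebra compatibility of $\rho$, and therefore $\rho$ is a Hopf algebra homomorphism.
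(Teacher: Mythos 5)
Your proposal is correct and follows essentially the same route as the paper: reduce to checking coproduct compatibility on the generators (freeness handles multiplication), then expand $\kappa_\gamma$ into delta functions and match the decompositions $A \simeq A' \oplus A''$ against the pairs $([A'],[A''])$ with $[A']+[A'']=\gamma$. Your extra remarks on the antipode for connected graded bialgebras and on the multiplicity bookkeeping make explicit what the paper leaves implicit, but the core argument is identical.
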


\begin{proof}
Since $\NC_{\CRF_S}$ is free as an algebra, we only need to check that the $\kappa_{\alpha}$ are compatible with the coproducts \ref{free_cop}, i.e. that
\begin{equation} \label{kappa_cop}
\Delta(\kappa_{\gamma}) = \sum_{ \substack{\alpha + \beta = \gamma \\ \alpha, \beta \in K^+_0 (\CRF_S)}} \kappa_{\alpha} \otimes \kappa_{\beta} 
\end{equation}
We have
\begin{align*}
\Delta(\kappa_{\gamma}) & = \sum_{ \{A \in \on{Iso}(\C) | [A] = \gamma \}} \Delta(\delta_{A}) \\
& = \sum_{ \{A \in \on{Iso}(\CRF_S) | [A] = \gamma \}} \sum_{\{A', A'' | A' \oplus A'' \simeq A\}} \delta_{A'} \otimes \delta_{A''}
\end{align*}
the result now follows by observing that the term $\delta_{A'} \otimes \delta_{A''}$ occurs exactly once in  $\kappa_{[A']} \otimes \kappa_{[A'']}$, which is an element of the right-hand side of \ref{kappa_cop}, since $[A']+[A'']=\gamma$. 

\end{proof}

\subsection{Connection to work of W. Zhao}

Let $\on{NC}$ denote the "usual" Hopf algebra of non-commutative symmetric functions. I.e. $\on{NC}$ is the free algebra on generators $Y_n,  n \in \mathbb{N}$, with coproduct defined by
\[
\Delta(Y_n) = \sum_{i+j =n} Y_{i} \otimes Y_{j}
\]
(we adopt the convention that $Y_0=1$). Suppose that the labeling set $S$ is a subset of $\mathbb{N}$. We then have group homomorphism 
\begin{align*}
V:K_0 (\CRF_S) & \rightarrow \mathbb{N}  \\
V(\sum a_s e_s) & := \sum a_s s
\end{align*}
which simply amounts to adding up the labels in a given forest. We can now define an algebra homomorphism 
\begin{align*}
J_S : \on{NC} & \rightarrow \NC_{\CRF_S} \\
J_S (Y_n) & := \sum_{ \substack{ \{ \alpha \in K^+_0 (\CRF_S) | \\ V(\alpha) = n \}}} X_{\alpha}
\end{align*}

\begin{lemma}
$J_S$ is a Hopf algebra homomorphism
\end{lemma}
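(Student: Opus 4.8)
The plan is to verify that $J_S$ respects coproducts on the algebra generators $Y_n$, since $\on{NC}$ is free as an algebra and $J_S$ is already given as an algebra homomorphism; compatibility with counits is immediate because $J_S$ is graded (sending degree-$n$ elements to elements supported in degrees $\alpha$ with $V(\alpha)=n$) and sends $1$ to $1$. Thus the statement reduces to checking the single identity $\Delta(J_S(Y_n)) = (J_S \otimes J_S)(\Delta(Y_n))$ in $\NC_{\CRF_S} \otimes \NC_{\CRF_S}$ for each $n \in \mathbb{N}$.

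First I would expand the left-hand side using the definition of $J_S$ and the coproduct \ref{free_cop} on $\NC_{\CRF_S}$:
\[
\Delta(J_S(Y_n)) = \sum_{\{\gamma \,:\, V(\gamma) = n\}} \Delta(X_\gamma) = \sum_{\{\gamma \,:\, V(\gamma) = n\}} \; \sum_{\substack{\alpha + \beta = \gamma \\ \alpha, \beta \in K^+_0(\CRF_S)}} X_\alpha \otimes X_\beta.
\]
Then I would expand the right-hand side using $\Delta(Y_n) = \sum_{i+j=n} Y_i \otimes Y_j$ and the definition of $J_S$ on each tensor factor:
\[
(J_S \otimes J_S)(\Delta(Y_n)) = \sum_{i + j = n} \; \sum_{\substack{\alpha \,:\, V(\alpha) = i \\ \beta \,:\, V(\beta) = j}} X_\alpha \otimes X_\beta.
\]
The two sides are then seen to be equal by reindexing: a pair $(\alpha,\beta) \in K^+_0(\CRF_S)^2$ contributes $X_\alpha \otimes X_\beta$ to the left-hand side exactly once, for $\gamma = \alpha+\beta$, provided $V(\alpha+\beta) = n$; and it contributes to the right-hand side exactly once, for $(i,j) = (V(\alpha), V(\beta))$, provided $V(\alpha) + V(\beta) = n$. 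Since $V$ is a group homomorphism, $V(\alpha+\beta) = V(\alpha) + V(\beta)$, so the index sets coincide term by term, and the two expressions agree.

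There is essentially no hard part here — the only thing to be careful about is the bookkeeping ensuring each monomial $X_\alpha \otimes X_\beta$ appears with multiplicity one on each side and that no $(\alpha,\beta)$ is lost or double-counted, which is exactly where the homomorphism property $V(\alpha+\beta) = V(\alpha)+V(\beta)$ (together with $V(K^+_0) \subseteq \mathbb{N}$, so that the constraint $V(\alpha), V(\beta) \geq 0$ is automatic) does all the work. One should also note that the convention $Y_0 = 1$ is consistent with $J_S$: the only $\alpha \in K^+_0(\CRF_S)$ with $V(\alpha) = 0$ is $\alpha = 0$ (since all labels in $S$, if $S \subseteq \mathbb{N}$, should be taken positive — or more carefully, if $0 \in S$ then $V$ is not injective but the unique class with empty underlying multiset still maps to the unit, so $J_S(Y_0) = X_0 = 1$ holds after identifying $X_0$ with the unit of $\NC_{\CRF_S}$). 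With these conventions in place the computation closes, and since the identity holds on all algebra generators $Y_n$ and both sides are algebra maps (using that $\Delta$ on each bialgebra is multiplicative), $J_S$ is a bialgebra homomorphism; being a graded-connected bialgebra map, it is automatically a Hopf algebra homomorphism.
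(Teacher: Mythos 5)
Your proposal is correct and follows essentially the same route as the paper: expand $\Delta(J_S(Y_n))$ via the coproduct \ref{free_cop}, then reindex the double sum using $V(\alpha+\beta)=V(\alpha)+V(\beta)$ to match $J_S(\Delta(Y_n))$. Your extra remarks about counits and the $Y_0=1$ convention (including the caveat about $0\in S$) are sensible bookkeeping that the paper leaves implicit.
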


\begin{proof}
We only need to check the compatibility of the coproduct. We have
\begin{align*}
\Delta(J_S (Y_n)) &=  \sum_{ \substack{ \{ \alpha \in K^+_0 (\CRF_S) | \\ V(\alpha) = n \}}} \Delta(X_{\alpha}) \\
&=  \sum_{ \substack{ \{ \alpha \in K^+_0 (\CRF_S) | \\ V(\alpha) = n \}}} \sum_{\gamma_1 + \gamma_2 = \alpha} X_{\gamma_1} \otimes X_{\gamma_2} \\
&= \sum_{\{ \gamma, \gamma' | V(\gamma) + V(\gamma') = n } X_{\gamma} \otimes X_{\gamma'} \\
& = J_S (\Delta (Y_n))
\end{align*}
\end{proof}

Composing $\rho$ and $J_S$, we obtain a Hopf algebra homomorphism
\begin{align*}
\rho \circ J_S : \on{NC} & \rightarrow \H_{\CRF_S} \\
\rho \circ J_S (Y_n) & = \sum_{\substack{A \in \on{Iso}(\CRF_S), \\ V([A]) = n}} \delta_{A} 
\end{align*}
which was considered in \cite{Z}. 

\section{The transpose of $\rho$}

The graded dual of the Hopf algebra $\NC_{\CRF_S}$ is a $K^+_0 (\C)$--graded version of the Hopf algebra of quasi-symmetric functions (see \cite{C}), which we proceed to describe. Let $\Q_{\CRF_S}$ denote the $\mathbb{Q}$--vector space spanned by the symbols $Z(\alpha_1, \alpha_2, \cdots, \alpha_k),  k \in \mathbb{N}, \alpha_i \in K^+_{0}(\CRF_S)$. We make $\Q_{\CRF_S}$ into a co-algebra via the coproduct 
\begin{align*}
\Delta(Z(\alpha_1, \cdots, \alpha_k)) &= 1 \otimes Z(\alpha_1, \cdots, \alpha_k)  \\
& + \sum^{k-1}_{i=1} Z(\alpha_1, \cdots, \alpha_i) \otimes Z(\alpha_{i+1}, \cdots, \alpha_{k}) + Z(\alpha_1, \cdots, \alpha_k) \otimes 1
\end{align*}
The algebra structure on $\Q_{\CRF_S}$ is given by the \emph{quasi-shuffle} product, as follows. Given $Z(\alpha_1, \cdots, \alpha_k)$ and $Z(\beta_1, \cdots, \beta_l)$, their product is determined by: 
\begin{enumerate}
\item Inserting zeros into the sequences $\alpha_1, \cdots, \alpha_k$ and $\beta_1, \cdots, \beta_l$ to obtain two sequences $\nu_1, \cdots, \nu_p$ and $\mu_1, \cdots, \mu_p$ of the same length, subject to the condition that for no $i$ do we have $\nu_i=\mu_i=0$.
\item For each such pair $\nu_1, \cdots. \nu_p$, and $\mu_1, \cdots, \mu_p$, writing $Z(\nu_1 + \mu_1, \cdots, \nu_p+\mu_p)$
\item Summing over all possible such pairs of sequences $\{ \nu_1, \cdots, \nu_p \}$, $\{ \mu_1, \cdots, \mu_p \}$. 
\end{enumerate}

\begin{example}
We have
\begin{align*}
 Z(\alpha_1) Z(\beta_1, \beta_2) & = Z(\alpha_1 + \beta_1, \beta_2) + Z(\beta_1, \alpha_1 + \beta_2) + Z(\beta_1, \beta_2, \alpha_1) \\ &  + Z(\beta_1, \alpha_1, \beta_2) + Z(\alpha_1, \beta_1, \beta_2) \\
 \end{align*}
\end{example}
One checks readily that the two structures are compatible, and that they respect the $K^+_0 (\CRF_S)$--grading determined by  $$deg(Z(\alpha_1, \cdots, \alpha_k)) = \alpha_1 + \cdots + \alpha_k.$$
The pairing $$<, >: \Q_{\CRF_S} \times \NC_{\CRF_S} \rightarrow \mathbb{Q}$$ determined by
\[
< Z(\alpha_1, \cdots, \alpha_n), X_{\beta_1} \cdots X_{\beta_m} > := \delta_{m,n} \delta_{\alpha_1, \beta_1} \cdots \delta_{\alpha_n} \delta_{\beta_m}
\]
makes $\Q_{\CRF_S}$ and $\NC_{\CRF_S}$ into a \emph{dual pair} of $K^+_0 (\CRF_S)$--graded Hopf algebras. I.e. 
\begin{align*}
< a \otimes b, \Delta(v) > & = <ab, v> \\
<\Delta(a), v \otimes w> & = < a, vw> 
\end{align*}
This implies that $\Q_{\CRF_S}$ is isomorphic to the graded dual of $\NC_{\CRF_S}$. 

Passing to graded duals, and taking the transpose of the homomorphism $\rho$, we obtain a Hopf algebra homomorphism 
\[
\rho^t : \H^*_{\CRF_S} \rightarrow \Q_{\CRF_S}
\]
As shown in \cite{KS}, $\H^*_{\CRF_S}$ is isomorphic to the Connes-Kreimer Hopf algebra on colored trees (see \cite{K}). 

We proceed to describe $\rho^t$. Let $\{ W_{A}, A \in \on{Iso}(\CRF_S) \}$ be the basis of $\on{H}^*_{\CRF_S}$ dual to the basis $\{ \delta_{A} \}$ of $\H_{\CRF_S}$. 

\begin{theorem}
\[
\rho^t (W_A) = \sum_k \sum_{ V_1 \subset \cdots \subset V_k = A} Z([V_1], [V_2/V_1], \cdots, [V_k/V_{k-1}]) 
\]
where the inner sum is over distinct $k$--step flags $$V_1 \subset V_2 \subset \cdots \subset V_k = A, \; V_i \in \on{Iso}(\CRF_S).$$ 
\end{theorem}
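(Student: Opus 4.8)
The plan is to compute $\rho^t(W_A)$ by evaluating it against an arbitrary basis element $X_{\beta_1}\cdots X_{\beta_m}$ of $\NC_{\CRF_S}$, using the defining adjunction $\langle \rho^t(W_A), X_{\beta_1}\cdots X_{\beta_m}\rangle = \langle W_A, \rho(X_{\beta_1}\cdots X_{\beta_m})\rangle$, and then reading off the coefficient of $Z(\beta_1,\dots,\beta_m)$ on the right-hand side of the claimed formula. The first step is thus to understand $\rho(X_{\beta_1}\cdots X_{\beta_m}) = \kappa_{\beta_1}\star\kappa_{\beta_2}\star\cdots\star\kappa_{\beta_m}$ as an element of $\H_{\CRF_S}$, expressed in the $\delta$-basis.

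\medskip

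\noindent\textbf{Unwinding the convolution product.} The key is to iterate \eqref{conv_product}. For two functions, $(f\star g)(M) = \sum_{A\subset M} f(A)\,g(M/A)$, where $A\subset M$ means the subobjects of $M$ in $\CRF_S$, i.e. those arising from admissible cuts: $A = P_C(F)$ and $M/A = R_C(F)$ for $M = F$. Iterating $m$ times, $(\kappa_{\beta_1}\star\cdots\star\kappa_{\beta_m})(M)$ is a sum over chains of subobjects $0 = V_0 \subset V_1 \subset \cdots \subset V_m = M$ (nested admissible cuts, or equivalently a filtration of $M$), of the product $\prod_{i=1}^m \kappa_{\beta_i}(V_i/V_{i-1})$. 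Since $\kappa_{\beta}(B) = 1$ if $[B]=\beta$ and $0$ otherwise, this evaluates to the number of such chains with $[V_i/V_{i-1}] = \beta_i$ for all $i$. Hence $\langle W_A, \rho(X_{\beta_1}\cdots X_{\beta_m})\rangle$ is precisely the number of distinct $m$-step flags $V_1\subset\cdots\subset V_m = A$ with successive quotients of class $\beta_1,\dots,\beta_m$. On the other side, the coefficient of $Z(\beta_1,\dots,\beta_m)$ in the claimed formula $\sum_k\sum_{V_1\subset\cdots\subset V_k = A} Z([V_1],[V_2/V_1],\dots,[V_k/V_{k-1}])$ is exactly that same count: only the $k=m$ terms contribute, and a flag contributes iff its quotient classes match $(\beta_1,\dots,\beta_m)$. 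Matching these for all $(\beta_1,\dots,\beta_m)$ and using that the $Z(\gamma_1,\dots,\gamma_k)$ form a basis of $\Q_{\CRF_S}$ gives the identity. I should also note the $k=0$/empty-flag convention handles $W_{\emptyset}\mapsto Z()=1$, and a one-step flag $V_1 = A$ contributes $Z([A])$, consistent with $\rho^t$ being a coalgebra-preserving map.

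\medskip

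\noindent\textbf{Care about "distinct" flags and finiteness.} The one genuinely delicate point — and the step I expect to be the main obstacle — is bookkeeping the word \emph{distinct} consistently between the Ringel-Hall side and the combinatorial side. In $\H_{\CRF_S}$ the convolution sum is over subobjects up to the relevant equivalence, so I must check that "chains of subobjects of $M$" in the iterated product correspond bijectively to "distinct flags $V_1\subset\cdots\subset V_m = A$" as intended in the statement; in particular that the multiplicity with which a given isomorphism type of flag appears matches on both sides (this is the analogue of the observation in the proof of Theorem 1 that $\delta_{A'}\otimes\delta_{A''}$ occurs exactly once). This requires recalling precisely, from \cite{KS}, how $\on{Hom}$, composition, and hence iterated subobjects/quotients are defined in $\CRF_S$, and that nested admissible cuts on a forest are the same data as a flag of subobjects. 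Finiteness is not an issue: $W_A$ is supported in a single degree $[A]$, so only finitely many tuples $(\beta_1,\dots,\beta_m)$ with $\sum\beta_i = [A]$ and only finitely many flags contribute, making both sums finite. Once the dictionary between nested cuts and flags is pinned down, the rest is the routine adjunction computation sketched above.
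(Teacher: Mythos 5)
Your proposal is correct and follows essentially the same route as the paper: evaluate $\rho^t(W_A)$ against monomials $X_{\beta_1}\cdots X_{\beta_m}$ via the dual pairing, identify the result as the coefficient of $\delta_A$ in $\kappa_{\beta_1}\star\cdots\star\kappa_{\beta_m}$, and recognize that coefficient as the number of flags with prescribed subquotient classes. Your extra attention to the "distinct flags" bookkeeping and finiteness is a reasonable elaboration of what the paper leaves implicit, but it is not a different argument.
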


\begin{proof}
We have
\[
\rho^t (W_A) (X_{\alpha_1} \cdots X_{\alpha_k}) = N(A; \alpha_1, \cdots, \alpha_k)  
\]
where $ N(A; \alpha_1, \cdots, \alpha_k)$ is the coefficient of $\delta_A$ in the product $\kappa_{\alpha_1} \kappa_{\alpha_2} \cdots \kappa_{\alpha_k}$. It follows from the definition of the multiplication in the Ringel-Hall algebra that this is exactly the number of flags 
$$V_1 \subset V_2 \cdots \subset V_k $$ where $[V_1]=\alpha_1, [V_2/V_1] = \alpha_2 ,\cdots,  [V_k/V_{k-1}] = \alpha_k$. 
\end{proof}

\begin{example} Let $S=\{a,b\}$ as in example \ref{ex1}. Using the notation introduced there, we have
\begin{align*}  \psset{levelsep=2ex, treesep=0.6cm, treenodesize=1pt,nodesepB=-2pt}
\rho^t \left( W_{  \pstree{\Tc*{3pt}~[tnpos=r]{a}}{\Tc*{3pt}~[tnpos=b]{b} \Tc*{3pt}~[tnpos=b]{a} }} \right) & = Z((2,1)) + Z((0,1), (2,0)) + Z((1,0), (1,1)) \\ & + Z((1,1), (1,0))  + Z((1,0),(0,1), (1,0)) + Z((0,1), (1,0), (1,0))
\end{align*}
\end{example}

\bigskip


\begin{thebibliography}{99}

\bibitem{C} Cartier, P.  A primer of Hopf algebras.  Frontiers in number theory, physics, and geometry. II,  537--615, Springer, Berlin, 2007.

\bibitem{CK} Connes, A. and Kreimer, D. Hopf algebras, renormalization, and noncommutative geometry. \emph{ Comm. Math. Phys. } {\bf 199} 203-242 (1998). 

\bibitem{GKLLRT} Gelfand I; Krob D; Lascoux A; Leclerc B; Retakh V; Thibon J-Y. Noncommutative symmetric functions.
\emph{Adv. Math. } {\bf 112} (1995), no. 2, 218--348. 

 \bibitem{J} Joyce, Dominic. Configurations in abelian categories. II. Ringel-Hall algebras. \emph{ Adv. Math. } {\bf 210} no. 2, 635--706 (2007).

\bibitem{K} Kreimer, D. On the Hopf algebra structure of perturbative quantum field theory. \emph{Adv. Theor. Math. Phys.} {\bf 2} 303-334 (1998).

\bibitem{KS} Kremnizer K. and Szczesny M. Feynman graphs, rooted trees, and Ringel-Hall algebras. \emph{Comm. Math. Phys.} {\bf 289} (2009), no. 2 561--577.

\bibitem{R} Ringel, C.  Hall algebras, Topics in algebra, Part 1 (Warsaw, 1988), 433Ð447, Banach Center Publ., 26, Part 1, PWN, Warsaw, (1990). 

\bibitem{S} Schiffmann, O. Lectures on Hall algebras. Preprint math.RT/0611617. 

\bibitem{Z} Zhao, Wenhua.  A noncommutative symmetric system over the Grossman-Larson Hopf algebra of labeled rooted trees. \emph{ J. Algebraic Combin }. {\bf 28}  (2008),  no. 2, 235--260.

\end{thebibliography}
\end{document}